\def\classification#1{\def\@class{#1}}
\DeclareMathAlphabet{\mathscr}{OT1}{rsfs}{n}{it}
\newcommand{\R}{{\mathbb R}}
\newcommand{\C}{\mathbb{C}}
\newtheorem{lemma}{Lemma}
\newtheorem{theorem}{Theorem}
\theoremstyle{remark}
\title{On new sum-product type estimates}
\author{Misha Rudnev}
\address{Misha Rudnev, Department of Mathematics, University of Bristol, Bristol BS8 1TW, United Kingdom} 
\email{m.rudnev@bristol.ac.uk}
\subjclass[2000]{68R05, 11B75}
\begin{document}
\begin{abstract} New lower bounds involving sum, difference, product, and ratio sets for  $A\subset \C$ are given.
\end{abstract}
%\date{August 10, 2005}

\maketitle

\section{Introduction} Erd\H os and Szemer\'edi (\cite{ES})  conjectured  that if $A$ is a set of integers, then
$$
|A+A|+|A\cdot A|\gg |A|^{2-o(1)},
$$
where
$$
A+A = \{a_1+a_2:\,a_{1,2}\in A\}
$$
is called the sum set of $A$, the product $A\cdot A$, difference $A-A$, and ratio $A:A$ sets being similarly defined. (In the latter case one should not divide by zero.)  The notations $\ll,\;\gg,\;\approx$ are being used throughout to suppress absolute constants in inequalities, the symbol
$o(1)$ in exponents absorbs logarithmic factors in the asymptotic parameter $|A|$, the cardinality of $A$.

Variations of the Erd\H os-Szemer\'edi question consider the set $A$ living in other rings or fields, as well as replacing, e.g., the sum set with the difference set $A-A.$ The conjecture is far from being
settled, and therefore partial current ``word records" on it vary with such variations of the input.

The best result for reals, for instance, is due to Solymosi (\cite{So1}), claiming
\begin{equation}\label{wr}
|A+A|+|A\cdot A|\gg |A|^{1+\frac{1}{3} - o(1)},
\end{equation}
and would include the endpoint exponent $\frac{4}{3}$ if $A\cdot A$ were replaced by $A:A$.

However, the construction is specific for reals, and does not appear either to extend easily to the case $A\subset \C$, or to allow for replacing the sum set $A+A$ with the difference set $A-A$. So,
if $A\in \C$ or if $A+A$ for reals gets replaced by $A-A$, the best known result comes from an older paper of Solymosi (\cite{So}), claiming
\begin{equation}\label{wr1}
|A\pm A|+|A\cdot A|\gg |A|^{1+\frac{3}{11} - o(1)},
\end{equation}
also without the $o(1)$ term if $A\cdot A$ gets replaced by $A:A$.

It may be worth mentioning that the result (\ref{wr1}) is based on the -- rather sophisticated --  Szemer\'edi-Trotter theorem, while (\ref{wr}) is not, using just elementary order properties of positive reals, expressed in the fact that if $\frac{a}{b}< \frac{c}{d}$, then $\frac{a+c}{b+d}$ falls in between. This note is almost entirely based on the Szemer\'edi-Trotter theorem and succeeds in slightly improving on (\ref{wr1}), yet not enough to beat (\ref{wr}).
\begin{theorem}\label{maint}
For any $A\subset \C$, with two or more elements one has
\begin{equation}\begin{aligned}
|A-A| + |A:A| &\gg & |A|^{1+\frac{9}{31} - o(1)}, \\
|A+A| + |A:A| &\gg & |A|^{1+\frac{15}{53} - o(1)},\\
|A-A| + |A\cdot A| &\gg & |A|^{1+\frac{11}{39} - o(1)},\\
|A+A| + |A\cdot A| &\gg & |A|^{1+\frac{19}{69} - o(1)}.
\end{aligned}
\label{res}\end{equation}
\end{theorem}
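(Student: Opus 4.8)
The plan is to run an Elekes-type incidence argument, refine it by a second-moment (energy) computation together with a dyadic decomposition, and then optimize the resulting system of inequalities to extract the four exponents. Throughout I rely on the Szemer\'edi--Trotter theorem, which by T\'oth's extension remains valid for points and lines in $\C^2$ with the same exponents; this is precisely what lets the whole scheme run over $\C$ and not merely over $\R$.

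First I set up the basic configuration. For $c,d\in A$ take the line $\ell_{c,d}\colon y=c(x-d)$; as $a$ ranges over $A$ the points $(d+a,\,ca)$ lie on $\ell_{c,d}$ and belong to $(A+A)\times(A\cdot A)$, with the obvious analogous lines in the difference and ratio variants. Counting incidences between the $\approx|A|^2$ lines and the $|A+A|\,|A\cdot A|$ points and applying Szemer\'edi--Trotter already recovers the Elekes bound $\max\gg|A|^{5/4}$, so any improvement must come from exploiting that these incidences are very unevenly distributed among the lines.

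Second, instead of counting raw incidences I would pass to energies. Write $E^{\times}(A)$ and $E^{+}(A)$ for the multiplicative and additive energies; Cauchy--Schwarz gives $E^{\times}(A)\ge|A|^4/|A:A|$ and $E^{+}(A)\ge|A|^4/|A\pm A|$. The crux is a Szemer\'edi--Trotter-driven upper bound on a \emph{mixed} quantity, essentially the number of solutions of an equation coupling one additive and one multiplicative difference (schematically $a_1-a_2=\lambda(a_3-a_4)$, summed against the multiplicative popularity of $\lambda$). Organizing the pairs by the dyadic size of how often their difference, respectively ratio, is represented, I bound each dyadic level by Szemer\'edi--Trotter and sum. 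This is where the logarithmic $o(1)$ enters in the product and difference cases; in the ratio cases $E^{\times}(A)=\sum_x r_{A:A}(x)^2$ is governed directly by $|A:A|$ through the group-like structure of ratios, the dyadic sum collapses, and no logarithm is lost --- exactly the asymmetry advertised in the introduction.

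Finally I would assemble the energy lower bounds and the Szemer\'edi--Trotter upper bounds into a single inequality relating $|A\pm A|$, the relevant multiplicative set, and $|A|$, with a free parameter $\tau$ coming from the popularity threshold in the pigeonholing (and Pl\"unnecke--Ruzsa used to pass between $|A\cdot A|$, $|A:A|$ and higher sumset/productset sizes). Balancing the two regimes of Szemer\'edi--Trotter (the $|P|^{2/3}|L|^{2/3}$ term against the linear terms) and optimizing over $\tau$ should yield the four distinct values $9/31,\,15/53,\,11/39,\,19/69$, the four reflecting the two binary choices of how cleanly Cauchy--Schwarz and Pl\"unnecke--Ruzsa apply. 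I expect the main obstacle to be the mixed incidence estimate of the second step: one must couple the additive and multiplicative data in a single point--line configuration so that Szemer\'edi--Trotter sees both at once, and then track the dyadic and Pl\"unnecke losses tightly enough that the optimization lands on these exact rational exponents rather than on something weaker.
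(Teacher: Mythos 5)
Your plan is built on the wrong base construction, and this is not a presentational issue: it caps the achievable exponent below every one of the four claimed bounds. You start from the Elekes configuration (lines $y=c(x-d)$, points in $(A+A)\times(A\cdot A)$) and propose to refine it by energy and dyadic arguments. The paper does something different: its core is Solymosi's construction, in which the lines are lines through the origin supporting many points of $A\times A$ (so that their slopes lie in $A:A$ and the line set encodes the multiplicative energy), and these lines are then \emph{translated} to every point of a set $Q$ (with $Q=\pm P$ or $P\pm P$), so that vector sums in $P+Q$ become intersection points of the translated family $\mathcal L$. The refinement that pushes past $\frac{14}{11}$ is not a generic second-moment computation but the Schoen--Shkredov cubic-energy inequality (Lemma \ref{twothr}): an upper bound on $E_3(P)$, obtained from the weighted Szemer\'edi--Trotter inverse-cube law of Lemma \ref{wst}, is converted into the lower bounds (\ref{thth}) for $E(P,P\pm P)$, which are then played against the upper bounds (\ref{upperbddone}) for the same quantities. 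Your ``mixed quantity'' $a_1-a_2=\lambda(a_3-a_4)$, which you yourself flag as the main obstacle, is precisely the ingredient you never supply, and nothing in your sketch replaces the $E_3$ mechanism.

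Moreover, even if your Elekes-based scheme were carried out in full, it could not yield the stated exponents. The paper addresses exactly this point: applying the Schoen--Shkredov trick (\ref{th}) to the Elekes-type estimate (\ref{livar}) improves the Elekes exponent $\frac{5}{4}$ only to $\frac{14}{11}-o(1)$, i.e.\ to $1+\frac{3}{11}$, which is strictly smaller than each of $1+\frac{9}{31}$, $1+\frac{15}{53}$, $1+\frac{11}{39}$, $1+\frac{19}{69}$. In the paper the Elekes construction appears only in the auxiliary Lemma \ref{ll}, to bound the popularity parameter $N$ in the product-set case. Two further inaccuracies: Pl\"unnecke--Ruzsa is nowhere used (the four cases arise from sum versus difference, where the sum set requires the bootstrap (\ref{forplus}), and from ratio versus product, where the product set requires Lemma \ref{ll}); and your claim that the ratio cases lose no logarithm contradicts the statement you are proving, in which all four bounds carry the $o(1)$ --- the logarithm enters through (\ref{e3est}) in both the ratio and product cases, and the log-free statements in the introduction refer to Solymosi's earlier results, not to this theorem.
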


\section{Lemmata}
The main tool behind the above estimates is the Szemer\'edi-Trotter incidence theorem. For a set $P$ of points and a set of $L$ straight lines in a plane let
$$
I(P,L)=\{(p,l)\in P\times L:\;p\in l\}
$$
be the set of incidences.

\begin{theorem}[Szemer\'edi and Trotter \cite{ST}] The maximum number of incidences in $\R^2$ is bounded as follows:
\begin{equation}\label{STe}
|I(P,L)|\ll (|P||L|)^{\frac{2}{3}} + |P| + |L|.
\end{equation}
In particular, if $P_t$ (or $L_t$) denote the sets of points (or lines) incident to at least $t\geq 1$ lines (or points) of $L$ (or $P$), then
\begin{equation}\label{work}
\begin{aligned}
|P_t|&\ll & \frac{|L|^2}{t^3} + \frac{|L|}{t},\\
|L_t|&\ll & \frac{|P|^2}{t^3} + \frac{|P|}{t}.
\end{aligned}
\end{equation}
\end{theorem}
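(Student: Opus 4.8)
The plan is to establish the basic incidence bound \eqref{STe} first and then read off the threshold estimates \eqref{work} from it by a popularity argument. For \eqref{STe} I would run Sz\'ekely's crossing-number argument, which is self-contained and gives clean constants. The idea is to encode the incidence structure as a plane graph and to play the combinatorial crossing-number inequality against the trivial geometric fact that two lines meet at most once.

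Concretely, I would form a drawing $G$ with vertex set $P$ whose edges join consecutive points of $P$ along each line of $L$; a line meeting $k_\ell \geq 1$ points of $P$ contributes $k_\ell - 1$ edges. Because two distinct points span a unique line, no two edges are drawn between the same pair of vertices, so $G$ is simple, with $n = |P|$ vertices and $e = \sum_\ell (k_\ell - 1) \geq |I(P,L)| - |L|$ edges. In this particular drawing two edges cross only where their supporting lines meet, and distinct lines meet at most once, so the number of crossings -- hence the crossing number $\mathrm{cr}(G)$ -- is at most $\binom{|L|}{2} \leq |L|^2/2$.

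Next I would invoke the crossing-number inequality. Euler's formula gives the linear bound $\mathrm{cr}(G) \geq e - 3n$ (delete one edge per crossing to reach a planar graph, which has at most $3n-6$ edges); a random vertex-sampling argument, keeping each vertex independently with probability $p = 4n/e \leq 1$ when $e \geq 4n$ and taking expectations, amplifies this to $\mathrm{cr}(G) \geq e^3/(64 n^2)$. Comparing with $\mathrm{cr}(G) \leq |L|^2/2$ yields $e \ll (|P||L|)^{2/3}$, so $|I(P,L)| \leq e + |L| \ll (|P||L|)^{2/3} + |L|$ whenever $e \geq 4|P|$; in the opposite regime $e < 4|P|$ one has $|I(P,L)| < 4|P| + |L|$ outright. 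Combining the two regimes proves \eqref{STe}.

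To obtain \eqref{work}, I would apply \eqref{STe} to the point family $P_t$ against all of $L$. Since every point of $P_t$ lies on at least $t$ lines, $t|P_t| \leq |I(P_t,L)| \ll (|P_t||L|)^{2/3} + |P_t| + |L|$; for $t$ exceeding an absolute constant the term $|P_t|$ on the right is absorbed into the left, and separating the cases where $(|P_t||L|)^{2/3}$ or $|L|$ dominates gives $|P_t| \ll |L|^2/t^3$ or $|P_t| \ll |L|/t$ respectively, which is the first line of \eqref{work}; the bound on $|L_t|$ is identical after interchanging the roles of points and lines, as \eqref{STe} is symmetric (equivalently, by projective duality). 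The only genuinely substantive step is the crossing-number inequality, and within it the probabilistic amplification from the linear to the cubic bound -- this is where the side condition $e \geq 4n$ and the numerical constant must be handled carefully; the incidence-to-graph encoding and the popularity splitting are routine. I would also note the caveat that \eqref{work} is really meant for $t \geq 2$, since for $t = 1$ the estimate can fail (for instance when many points of $P$ lie on a single line), the absorption step tacitly requiring $t$ to be bounded below.
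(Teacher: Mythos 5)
The paper offers no proof of this theorem at all: it is quoted from \cite{ST} as a known tool, with the complex version attributed to T\'oth \cite{T}. So your proposal is not an alternative to an internal argument but a self-contained substitute, and it is a correct one --- it is Sz\'ekely's crossing-number proof. The graph you draw is simple (two points span a unique line), its crossings in that drawing number at most $\binom{|L|}{2}$ (two lines meet at most once), and the amplified crossing inequality $\mathrm{cr}(G)\ge e^3/(64n^2)$ under $e\ge 4n$ yields $e\ll (|P||L|)^{2/3}$, the complementary regime $e<4|P|$ supplying the linear terms; this gives \eqref{STe}, and the popularity step $t|P_t|\le |I(P_t,L)|$ with the case analysis gives \eqref{work}, the bound on $|L_t|$ following by the symmetry of \eqref{STe} in $|P|$ and $|L|$. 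Two points deserve attention. First, ``$t$ exceeding an absolute constant'' leaves the range of bounded $t\ge 2$ formally uncovered by the absorption step; it is closed by the trivial remark that a point on at least two lines is an intersection point of lines, so $|P_t|\le\binom{|L|}{2}\ll |L|^2/t^3$ when $t\ge 2$ is $O(1)$. Second, your caveat about $t=1$ is a genuine correction to the statement as printed: with all of $P$ on a single line of $L$ one has $|P_1|=|P|$ while the right-hand side of \eqref{work} is $O(1)$, so the hypothesis should read $t\ge 2$ (every application in the paper indeed has $t\gg 1$, so nothing downstream is affected). Finally, note the one thing your route cannot deliver: the crossing-number argument relies on the topology of $\R^2$ (Euler's formula and planarity), so it proves the theorem exactly as stated over the reals, but the paper's main theorem concerns $A\subset\C$ and needs the Szemer\'edi--Trotter bound in the complex plane; that is precisely why the paper cites T\'oth \cite{T} (or Solymosi--Tao \cite{SoT}, with a loss in the exponent) rather than relying on any real-variable proof.
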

Let us note that the linear in $|P|,|L|$ terms in the estimates (\ref{STe}, \ref{work}) are essentially trivial and usually of no interest in the sense of being dominated by the non-linear ones, whenever these estimates are being used. This is also the case in this paper.

The Szemer\'edi-Trotter theorem is also true in the plane
over $\C$. This was proved by T\'oth (\cite{T}). Recently there has been a new proof by Solymosi and Tao  (\cite{SoT}) which had to sacrifice the endpoint in the exponent for the sake of elegance of the method.

One can easily develop a weighted version of the Szemer\'edi-Trotter theorem, see Iosevich et al. (\cite{IKRT}). Suppose each line $l\in L$ has been assigned a weight $m(l)\geq 1$. The number of weighted incidences
$i_m(P,L)$ is obtained by summing over the set $I(P,L)$, with each pair $(p,l)$ being counted $m(l)$ times. Suppose, the total weight of all lines is $W$ and the maximum weight per line is $\bar m$. It is argued in \cite{IKRT} that the worst possible case for the weighted incidence estimate is the uniform one, when there are $\frac{W}{\bar m}$ lines of equal weight $\bar m$, hence the following theorem.

\begin{theorem} \label{STw} The maximum number of weighted incidences between a point set $P$ and a set of lines with the total weight $W$ and  maximum weight per line $\bar m$ is
\begin{equation}\label{STew}
i_m(P,L)\ll \bar m^{\frac{1}{3}} (|P|W)^{\frac{2}{3}} + \bar m |P| + W.
\end{equation}
\end{theorem}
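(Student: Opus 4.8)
The plan is to reduce the weighted estimate to a single application of the unweighted Szemer\'edi--Trotter theorem (\ref{STe}), applied only to the point-richest lines. Writing $d(l)=|\{p\in P:\,p\in l\}|$ for the number of points of $P$ on a line $l$, the weighted incidence count is exactly
\[
i_m(P,L)=\sum_{l\in L} m(l)\,d(l).
\]
I would regard the right-hand side as a linear functional of the weight vector $(m(l))_{l\in L}$ and ask how large it can be under the only constraints relevant for an upper bound, namely $0\le m(l)\le \bar m$ and $\sum_{l} m(l)=W$. Relaxing the hypothesis $m(l)\ge 1$ to $m(l)\ge 0$ only enlarges the feasible region, so this can only help.

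First I would order the lines so that $d(l_1)\ge d(l_2)\ge\cdots$ and invoke the elementary fractional-knapsack principle: a linear functional $\sum_l m(l)d(l)$ subject to a box constraint $0\le m(l)\le \bar m$ and a budget $\sum_l m(l)=W$ is maximized by saturating the coordinates carrying the largest coefficients $d(l)$. Thus, setting $N=\lfloor W/\bar m\rfloor$, one obtains
\[
i_m(P,L)\ \le\ \bar m\sum_{i=1}^{N+1} d(l_i)\ =\ \bar m\,I(P,L'),\qquad L'=\{l_1,\dots,l_{N+1}\}.
\]
Here $L'$ is a set of genuinely distinct lines with $|L'|\le W/\bar m+1$; note that $W=\sum_l m(l)\le \bar m|L|$ forces $N\le |L|$, so the selection is legitimate (in the degenerate case $N\ge|L|$ one simply takes $L'=L$). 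This is precisely the assertion that the extremal configuration is the uniform one: all the weight $\bar m$ concentrated on the $\approx W/\bar m$ point-richest lines.

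The final step is to feed $P$ and $L'$ into the unweighted bound (\ref{STe}). Since $|L'|\ll W/\bar m$, this yields
\[
i_m(P,L)\ \ll\ \bar m\bigl[(|P|\,|L'|)^{\frac23}+|P|+|L'|\bigr]\ \ll\ \bar m\Bigl(\tfrac{|P|W}{\bar m}\Bigr)^{\frac23}+\bar m|P|+W,
\]
and simplifying the first term to $\bar m^{1/3}(|P|W)^{2/3}$ gives exactly (\ref{STew}). I expect the only delicate point to be the justification of the reduction step: convincing oneself that concentrating the mass onto the highest-incidence lines is extremal, and that the resulting collection is honestly a set of \emph{distinct} lines of size $O(W/\bar m)$, so that a single clean application of (\ref{STe}) suffices. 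The cruder alternative --- a dyadic decomposition of $L$ into weight-classes $m(l)\approx 2^j$ followed by (\ref{STe}) on each class --- also works, but it loses a logarithmic factor in the linear term $\bar m|P|$, which the knapsack reduction avoids.
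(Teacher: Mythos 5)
Your proof is correct and follows essentially the same route as the paper: the paper reduces to the extremal uniform configuration of $\approx W/\bar m$ lines of equal weight $\bar m$ (citing \cite{IKRT} for the claim that this is the worst case) and then applies the unweighted bound (\ref{STe}), which is exactly what your fractional-knapsack argument establishes rigorously. If anything, your write-up is more self-contained, since it supplies the extremality argument that the paper outsources to the citation.
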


\medskip
This paper uses in its core the same geometric construction as Solymosi (\cite{So}) did, which yielded the exponent $\frac{14}{11}$, with some more detailed analysis of the incidences involved by dealing with the weighted case.
Yet the improvements over (\ref{wr1}) are due to combining this construction with a recent purely additive-combinatorial observation by Shkredov and Schoen (\cite{SS0}, Lemma 3.1) which allowed for a series of the latest state-of-the art
improvements in incremental progress towards a number of open questions in field combinatorics in \cite{SS0}, \cite{SV}, \cite{SS}.

\medskip
The above-mentioned additive-combinatorial observation is the content of the following Lemma \ref{twothr}, quoting which requires some notation used in the sequel. Throughout the rest of this section $A,B$ denote any sets in an Abelian group. The following energy notations $E$, when applied in a field will bifurcate into $E$ and $E_*$, respectively, relative to the addition and multiplication operations.

For any $d\in A-A$, set
\begin{equation}
A_d=\{a\in A: \,a+d\in A\}.\label{ad}
\end{equation}
Denote
$$
E(A,B)=|\{(a_1,a_2,b_1,b_2)\in A\times A\times B\times B:\;a_1-a_2 = b_1-b_2\}|,
$$
referred to as the ``additive energy'' of $A,B$.
By the Cauchy-Schwarz inequality, rearranging the terms in the above definition of $E(A,B)$, one has
\begin{equation}\label{esc}
E(A,B)|A\pm B|\geq |A|^2|B|^2.\end{equation}
Indeed, if $d$ is an element of $A - B$ or $s$ is an element of $A+B$, and $n(d),\,n(s)$ are the number of realisations of $d$ and $s$, respectively as a difference or sum of a pair of elements from $A\times B$,  (\ref{esc}) follows from the fact that
\begin{equation}
E(A,B) = \sum_{d\in A-B} n^2(d) = \sum_{s\in A+B} n^2(s).
\label{epm}\end{equation}

Also, $E(A,A)=E(A)$ is referred to as the ``energy of $A$''. In this case note that according to the notation (\ref{ad}), $n(d)=|A_d|$.

Moreover, the following inequality will be useful. If
\begin{equation}\label{dplus}
D^+ =\left\{d\in A-A:\,n(d)\geq \frac{1}{2}\frac{|A|^2}{|A+A|}\right\},
\end{equation}
then ``the energy supported on $D^+$'', namely the left-hand side of the next inequality, satisfies
\begin{equation}
\sum_{d\in D^+} n^2(d) \gg \frac{|A|^4}{|A+ A|}.\label{needed}\end{equation}
Indeed, by (\ref{esc}) with $A=B$, the energy supported on the complement of $D^+$ is trivially bounded from above by the right-hand side of (\ref{needed}).

Also useful will be the ``cubic energy'' of $A$, which is
\begin{equation}\label{threee}
E_3(A) = |\{(a_1,\ldots,a_6)\in A\times\ldots\times A:\;a_1-a_2=a_3-a_4=a_5-a_6\}|.
\end{equation}
This definition implies that (\cite{SS}, Lemma 2)
\begin{equation}
\label{twothree} E_3(A) = \sum_{d\in A-A} E(A,A_d).
\end{equation}
To see this, fix any $d=a_1-a_2$ in (\ref{threee}) and observe that if one is to count every representation $d=a_3-a_4$ as many times as $a_3,a_4\in A_d$ for some $d$, this will happen exactly $n(d)$ times, for different $d=a_5-a_3=a_6-a_4$ in (\ref{threee}).

The following statement is the content of Corollary 3 in \cite{SS}, with trivial variations.
\begin{lemma}\label{twothr} One has the following identities, for any $D'\in A-A$:
\begin{equation}\begin{aligned}
\label{th} \sum_{d\in D'} |A_d||A\pm A_d| &\geq &\frac{|A|^2\left( \sum_{d\in D'}  |A_d|^{\frac{3}{2}}\right)^2 }{E_3(A)},\\ \hfill \\
\sum_{d\in D'} |A_d|^2|A\pm A_d| &\geq &\frac{|A|^2\left( \sum_{d\in D'}  |A_d|^2\right)^2 }{E_3(A)}.
\end{aligned}
\end{equation}
\end{lemma}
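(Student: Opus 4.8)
The plan is to reduce each of the two inequalities to a single application of the Cauchy--Schwarz inequality, using only two facts already established in this section: the energy bound (\ref{esc}) applied to the pair $A,A_d$, and the identity (\ref{twothree}) expressing $E_3(A)$ as a sum of the energies $E(A,A_d)$. The common mechanism is to introduce the weights $E(A,A_d)$ into each summand, split by Cauchy--Schwarz, and then discard the auxiliary factor via (\ref{twothree}).

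First I would record the pointwise lower bound on the sumset (or difference set). Applying (\ref{esc}) with $B=A_d$ gives, for each $d$,
$$
|A\pm A_d|\;\geq\;\frac{|A|^2|A_d|^2}{E(A,A_d)}.
$$
Multiplying by $|A_d|$ (respectively by $|A_d|^2$) and summing over $d\in D'$ yields
$$
\sum_{d\in D'}|A_d|\,|A\pm A_d|\;\geq\;|A|^2\sum_{d\in D'}\frac{|A_d|^3}{E(A,A_d)},
\qquad
\sum_{d\in D'}|A_d|^2\,|A\pm A_d|\;\geq\;|A|^2\sum_{d\in D'}\frac{|A_d|^4}{E(A,A_d)}.
$$

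The decisive step is then to bound these sums from below by Cauchy--Schwarz, choosing the split so that $E(A,A_d)$ is factored out. For the first inequality I would write $|A_d|^{3/2}=\bigl(|A_d|^3/E(A,A_d)\bigr)^{1/2}\cdot E(A,A_d)^{1/2}$, giving
$$
\Bigl(\sum_{d\in D'}|A_d|^{3/2}\Bigr)^2\;\leq\;\Bigl(\sum_{d\in D'}\frac{|A_d|^3}{E(A,A_d)}\Bigr)\Bigl(\sum_{d\in D'}E(A,A_d)\Bigr).
$$
Since $D'$ is a subset of $A-A$, the identity (\ref{twothree}) gives $\sum_{d\in D'}E(A,A_d)\leq E_3(A)$; rearranging and inserting this into the displayed lower bound produces exactly the first claimed inequality. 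The second follows identically, splitting instead $|A_d|^2=\bigl(|A_d|^4/E(A,A_d)\bigr)^{1/2}\cdot E(A,A_d)^{1/2}$ and again using $\sum_{d\in D'}E(A,A_d)\leq E_3(A)$.

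The argument is essentially free of obstacles once the correct weighting by $E(A,A_d)$ is spotted; this is the one creative choice, after which both parts collapse to the same two-line scheme. The only point requiring any care is that the auxiliary sum $\sum_{d\in D'}E(A,A_d)$ must be controlled by the full sum over $A-A$, which is where the nonnegativity of each $E(A,A_d)$ together with the restriction $D'\subseteq A-A$ enters. No further positivity or cancellation issues arise.
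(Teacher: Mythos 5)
Your proof is correct and is essentially the paper's own argument: both rest on (\ref{esc}) applied with $B=A_d$, a Cauchy--Schwarz inequality over $d\in D'$ with weights $E(A,A_d)$, and the bound $\sum_{d\in D'}E(A,A_d)\leq E_3(A)$ coming from (\ref{twothree}). The only difference is cosmetic: the paper sums the products $\sqrt{|A_d||A\pm A_d|}\,\sqrt{E(A,A_d)}$ and squares at the end, whereas you divide by $E(A,A_d)$ first and then apply the weighted Cauchy--Schwarz; the two computations are identical.
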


\begin{proof} To verify the first inequality of (\ref{thth}) observe that  by the Cauchy-Schwarz inequality, for each $d$:
$$ \sqrt{|A_d||A\pm A_d|} \sqrt{E(A,A_d)} \geq |A||A_d|^{\frac{3}{2}}.
$$
Summing over $d\in D'$ and applying once again the Cauchy-Schwartz inequality to the left-hand side yields
$$
\sqrt{ \sum_{d\in D'} |A_d||A\pm A_d| } \sqrt{ \sum_{d\in D'} E(A,A_d)}  \geq |A|\sum_{d\in D'}|A_d|^{\frac{3}{2}}.
$$
squaring both sides and using (\ref{twothree}) does the job. Verifying the second inequality in (\ref{th}) requires merely a straightforward modification of the above.\end{proof}

\medskip
The identities (\ref{th}) suggest that the cubic energy estimate from above can be  quite useful, since $A-A_d \subseteq (A-A)\cap (A-A-d)$, as well as $A+A_d\subset (A+A)\cap (A+A+d)$. The latter observation, which \cite{SS} credits to Katz and Koester
(\cite{KK}), allows for the following interpretation of (\ref{th}), producing lower bounds for $E(A,A-A)$ and $E(A,A+A)$. Indeed, the left-hand side in (\ref{twothr}) itself is a lower bound for both $E(A,A\pm A)$. However, in order to be able to express it in terms of the sum set, via (\ref{epm}), one needs to deal with the sum of $|A_d|^2$ in the right-hand side, rather than of $|A_d|^{\frac{3}{2}}.$

In the first formula of (\ref{th}) assume that $D'$ is a popular subset of $A-A$, namely
\begin{equation}\label{dprime}
D'=\left\{d\in A-A:\; |A_d|\geq \frac{1}{2}\frac{|A|^2}{|A-A|}\right\}.
\end{equation}
Then
$$
\sum_{d\in D'}  |A_d|^{\frac{3}{2}}\gg \left(\frac{|A|^2}{|A-A|}\right)^{\frac{1}{2}}\sum_{d\in D'}  |A_d|\gg  \left(\frac{|A|^2}{|A-A|}\right)^{\frac{1}{2}} |A|^2.
$$
In the second formula of (\ref{th}) replace $D'$ with $D^+$, defined by (\ref{dplus}). Then
\begin{equation}
\label{thth}
\begin{aligned}
 E(A,A-A) &\geq \frac{|A|^8 }{|A-A| E_3(A)}, \\
E(A,A+A)  &\geq  \frac{|A|^{10}}{|A+A|^2 E_3(A) \max_{d\in D^+}|A_d| }.
\end{aligned}
\end{equation}

\subsection{Some applications of Lemma \ref{twothr}} The estimates (\ref{thth}) enabled Schoen and Shkredov (\cite{SS}) to achieve progress on the sum set of a convex set problem. If $A=f([1,\ldots,N])$, where $f$ is a strictly convex real-valued function, then $|A-A|\gg
|A|^{\frac{5}{3}-o(1)}$, with a slightly worse estimate $|A+A|\gg |A|^{\frac{14}{9}-o(1)}$ for the sum-set. The reason for the two estimates being different is that dealing with the sum set necessitated bootstrapping  the earlier established exponent $\frac{3}{2}$, which had been obtained over the past ten years or so in various guises, with or without using the Szemer\'edi-Trotter theorem. (See e.g. \cite{ENR}, \cite{G}, \cite{IKRT}. The conjectured exponent in the convex set sum set problem is $2-o(1).$)

Li (\cite{Li}) -- see also his recent work with Roche-Newton (\cite{LR}) -- has adapted the approach of \cite{SS} to the sum-product problem, using a specific convex function, the exponential. This was also observed in another paper of Schoen and Shkredov (\cite{SS1}, Corollary
25). The result was the exponent $\frac{14}{11}-o(1)$. A closer look at this adaptation -- see the Appendix in this paper -- reveals that the exponential function has as much do with it as basically replacing $a$ with $\exp(\log a)$ in a variant of the old sum-product construction by Elekes (\cite{E}): applying the Schoen-Shkredov
trick, expressed in (\ref{th}) to the estimate (\ref{livar}) in the Appendix would improve the Elekes exponent $\frac{5}{4}$ to $\frac{14}{11}-o(1)$.

The same exponent $\frac{14}{11}-o(1)$ had been coincidentally obtained in Solymosi's work \cite{So1}, as stated in (\ref{wr1}) above.

This note uses the construction of \cite{So1} and combines it with an
estimate of the type (\ref{th}), thereby getting an improvement (\ref{res}) over (\ref{wr1}). The estimates involving the sum set end up being worse, because they require bootstrapping the estimates which themselves would yield the exponent $\frac{14}{11}-o(1)$ only. It will also use the Elekes construction in order to channel the estimates involving the ratio set into ones using the product set. These will have to bootstrap the Elekes exponent $\frac{5}{4}$, thus making the estimates involving the product set still worse.

\section{Proof of Theorem \ref{maint}}
The condition $|A|\geq 2$ is tantamount to assuming that $0\not\in A$. Consider a point set $A\times A$ in the coordinate plane.

The multiplicative energy $E_*(A)$ of $A$ is defined as the number of solutions of the following
equation
$$
E_*(A) = |\{ (a_1,\ldots a_4)\in A\times\ldots\times A:\; a_1/a_2 = a_3/a_4\}|.
$$
By the Cauchy-Schwarz inequality
\begin{equation}
E_*(A)\geq \max\left(\frac{|A|^4}{|A\cdot A|},\;\frac{|A|^4}{|A: A|}\right) .
\label{emult}\end{equation}
Geometrically,  $E_*(A)$ is the number of ordered pairs of points of $A\times A$ in the plane on straight lines through the origin, whose slopes $r$ are members of the ratio set $A:A$. Hence, elements of $A:A$ can be identified with lines through the origin supporting points of $A\times A$.

The proof will deal with an in some sense ``popular'' subset $L$ of these lines, $P$ denoting a subset of $A\times A$ supported on the lines in $L$. There will be two cases to consider.

\medskip
{\sf Ratio set case.} In order to establish the first two estimates of (\ref{res})
the notation $L$ will stand for the set of popular lines through the origin, namely those supporting at least $\frac{1}{2}|A|^2|A:A|$ points of $A\times A$. The subset of $A\times A$ supported on these lines $L$ will be denoted as $P$. One has $|P|\gg |A|^2$, as well as $|A|\leq |L|\leq |A:A|.$ Let us also use the notation $N$ for the maximum number of points per line, clearly $N\leq |A|.$
It follows from (\ref{emult}) that if $n(l)$ denotes the number of elements of $P$ supported on a line $l\in L$,
\begin{equation}
\sum_{l\in L} n^2(l)\gg \frac{|A|^4}{|A : A|}.
\label{csm}\end{equation}

\medskip
{\sf Product set case.} In order to establish the last two estimates of (\ref{res}), the same notations $P,L$ will apply to slightly differently defined sets, as well as the quantity $N$, as follows. The set $P$ will be a ``popular multiplicative energy'' subset of $A\times A$, constructed by the standard dyadic pigeonholing procedure by popularity, in terms of supporting points of $A\times A$, of all lines through the origin. The elements of $P$ are those points in $A\times A$ which lie on a set $L$ of straight lines passing through the origin, supporting between $N/2$ and $N$ points each, and such that
\begin{equation}
|L|N^2\gg \frac{|A|^4}{|A\cdot A|\log|A|}. \label{min}\end{equation}
Such sets $L,P$ always exist, by the pigeonhole principle and (\ref{emult}).

Important in the product set case will be the following bounds on $N$.

\begin{lemma}\label{ll} There exists $L,N$ satisfying (\ref{min}) and such that
\begin{equation}\label{nbd}
\frac{1}{2}\frac{|A|^2} {|A\cdot A|}\leq N\ll \frac{|A\pm A|^2|A\cdot A|}{|A|^3}.\end{equation}\end{lemma}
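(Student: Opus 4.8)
The plan is to locate the required dyadic level by a two-sided pruning of the multiplicative energy $E_*(A)=\sum_l n^2(l)$, the sum running over lines $l$ through the origin, where $n(l)=|\{a\in A:\,ra\in A\}|$ for $l$ of slope $r$. First I would discard the \emph{sparse} lines, those with $n(l)<\frac12\frac{|A|^2}{|A\cdot A|}$: since $\sum_l n(l)=|A|^2$, their total contribution to the energy is at most $\frac12\frac{|A|^2}{|A\cdot A|}\cdot|A|^2\le\frac12 E_*(A)$ by (\ref{emult}). Hence the surviving \emph{dense} lines carry at least half of $E_*(A)\ge\frac{|A|^4}{|A\cdot A|}$, and any dyadic scale $N$ selected from among them automatically meets the lower bound $N\ge\frac12\frac{|A|^2}{|A\cdot A|}$ of (\ref{nbd}).

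The upper bound is the substantial half, and it amounts to showing that the \emph{over-rich} lines cannot account for this surviving energy. Writing $R(t)$ for the number of lines through the origin carrying at least $t$ points of $A\times A$, the heart of the matter is the incidence estimate $R(t)\ll\frac{|A|\,|A\pm A|^2}{t^3}$. To prove it I would, in the spirit of Elekes \cite{E} and Solymosi \cite{So}, blow up each $t$-rich slope $r$ into the $|A|$ parallel lines $y=rx\pm c$, $c\in A$: each of these still meets the point set $\mathcal{P}=A\times(A\pm A)$ in the $\ge t$ points $(a,\,ra\pm c)$ with $a\in A_r$, and for distinct pairs $(r,c)$ the lines are distinct, so the family has exactly $R(t)\,|A|$ lines. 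Bounding the number of $t$-rich lines of $\mathcal{P}$ by (\ref{work}) with $|\mathcal{P}|=|A|\,|A\pm A|$ and dividing the outcome by $|A|$ gives the claim; the linear term is harmless because $t\le|A|$ and $|A\pm A|\ge|A|$ force $|\mathcal{P}|=|A|\,|A\pm A|\ge t^2$, so the nonlinear term dominates throughout the admissible range.

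With $R(t)$ in hand I would set $M=C\,\frac{|A\pm A|^2|A\cdot A|}{|A|^3}$ and estimate the energy of the lines with $n(l)\ge M$ by dyadic summation, $\sum_{l:\,n(l)\ge M}n^2(l)\ll\sum_{t\ge M}t^2\,R(t)\ll\frac{|A|\,|A\pm A|^2}{M}=\frac{|A|^4}{C\,|A\cdot A|}$, which for $C$ large is a small fraction of the dense energy $\gg\frac{|A|^4}{|A\cdot A|}$. Consequently the lines with $\frac12\frac{|A|^2}{|A\cdot A|}\le n(l)\le M$ still carry $\gg\frac{|A|^4}{|A\cdot A|}$ of the energy; distributing this over the $\ll\log|A|$ dyadic scales and choosing the heaviest one produces a level $N$ obeying (\ref{min}) together with both inequalities of (\ref{nbd}), uniformly for the sum- and difference-set variants.

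The main obstacle I anticipate is extracting the correct power $|A|^3$ in the denominator of (\ref{nbd}). A direct appeal to (\ref{work}) with the $|A|^2$ points of $A\times A$, or with a single line per slope, only delivers $R(t)\ll\frac{|A|^2|A\pm A|^2}{t^3}$ and hence an upper bound weaker by a factor of $|A|$. The saving comes precisely from the parallel-line blow-up, which multiplies the count of $t$-rich lines by $|A|$ while enlarging the point set only to $A\times(A\pm A)$; the delicate point is to verify that dividing out this extra $|A|$ genuinely recovers the claimed exponent, and that bounding each blown-up line from below by $t$ incidences (rather than by its true multiplicity) does not lose anything essential.
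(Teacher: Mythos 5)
Your proof is correct and follows essentially the same route as the paper's: a two-sided pruning of the multiplicative energy (the popularity bound from below, and a Szemer\'edi--Trotter/Elekes-type estimate $R(t)\ll |A||A\pm A|^2/t^3$ on the number of $t$-rich slopes from above), followed by dyadic pigeonholing to select a scale $N$ satisfying (\ref{min}) and (\ref{nbd}). The only cosmetic difference is that you apply the rich-lines form of (\ref{work}) to the blown-up family $\{y=rx\pm c:\ c\in A\}$ against the point set $A\times(A\pm A)$, whereas the paper applies the dual rich-points form to the line family $\{y=\frac{d+x}{a}:\ d\in A\pm A,\ a\in A\}$; the two counts coincide under point--line duality and yield identical bounds.
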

The lower bound in (\ref{nbd}) follows right away from (\ref{emult}) and a popularity argument. The upper bound comes from a variant of the Elekes construction (\cite{E}) apropos of sum-products. A variant of Lemma \ref{ll} can be found in the above-mentioned works  \cite{LR}, \cite{SS1}. A short proof is given in the Appendix.

\medskip
In both the ratio and product set cases, it can be assumed that $|L|\gg N$. It is clear in the ratio set case, where $N\leq |A|$ and thus, by (\ref{csm}),  $|L|\gg \frac{|A|^3}{|A:A|}\geq |A|.$

In the product set cases $|P|\approx |L|N$. Assume that $N\gg |L|.$
Since  $N$ satisfies (\ref{nbd}) and $LN^2$ is bounded from below by (\ref{min}), it follows that
$$|A\pm A|^6|A\cdot A|^4 \gg \frac{|A|^{13}}{\log |A|},$$
which is far better than the last two claims in (\ref{res}).

\medskip
Beginning now the proof proper, in either of the two cases above consider the sum set of $P$ with some other set $Q$ in the plane, with $|Q|\geq |P|$. (In the sequel $Q =\pm P$ or $P\pm P$). To obtain the vector sums, one translates the lines from $L$ to each point of $Q$, getting thereby some set ${\mathcal L}$ of lines with $|{\mathcal L}|\leq |L||Q|$.

The
Szemer\'edi-Trotter theorem, namely (\ref{STe}), enables one to estimate $|\mathcal L|$ from below:
$$
|L||Q|\ll |\mathcal L|^{\frac{2}{3}} |Q|^{\frac{2}{3}}.
$$
(It is easy to see that the fact that since $|L|\leq |Q|\leq |{\mathcal L}|\leq |L||Q|$, trivial terms in this application of (\ref{STe}) can be ignored.) Thus
\begin{equation}
|{\mathcal L}|\gg |L|^{\frac{3}{2}}|Q|^{\frac{1}{2}}. \label{lowerl}\end{equation} Let us call the number of points of $Q$ on a particular line $l\in \mathcal L$, the weight  $m(l)$ of $l$. The total weight $W$ of all lines in the collection $\mathcal L$ is by construction equal to $|L||Q|$.

The lines in ${\mathcal L}$ have been given weights, because the same line $l\in \mathcal L$ can contribute to the same vector sum in $P+Q$ at most $\max(N,m(l))$ times. Hence, let us lower the weights of lines, which are ``too heavy'': whenever $m(l)\geq N$, redefine it as $N$.

Therefore, for the total weight $W$ and the mean weight $\bar{m}$ per line, from (\ref{lowerl}) one has
\begin{equation}\label{weight}
W\leq |L||Q|,\qquad \bar m =\frac{W}{|\mathcal L|}\ll \sqrt{\frac{|Q|}{|L|}}.
\end{equation}

The Szemer\'edi-Trotter
theorem, namely (\ref{work}), tells one that the weight distribution over $\mathcal L$  obeys the inverse cube law. I.e.,
for $t\leq N$, one has
\begin{equation}
|\mathcal L_t| =|\{l\in \mathcal L:\, m(l)\geq t\}|\ll \frac{|Q|^2}{t^3}+\frac{|Q|}{t}\ll \frac{|Q|^2}{t^3},
\label{incube}\end{equation}
 as since $N\ll\sqrt{|Q|}$, the trivial term $\frac{|Q|}{t}$ gets dominated by the first term.

Now, let us look at the set ${\mathcal P}(\mathcal L)$ of all pair-wise intersections of lines from $\mathcal L$, and for an intersection point $p\in \mathcal P(\mathcal L)$ of some $k\geq 2$ lines
$l_1,\ldots, l_k$  look at the sum of the weights of the lines that intersect there:
$$
m(p) = \sum_{i=1}^k m(l_k).
$$
For any point set $\mathcal P\subseteq \mathcal P(\mathcal L)$, the number of weighted incidences between ${\mathcal P}$ and ${\mathcal L}$ is the sum over all pairs $(p,l)\in I(\mathcal P,\mathcal L)$, counting each pair $(p,l)$ with the weight $m(p)$.

The inverse cube weight distribution over the set of lines $\mathcal L$ enables one to use the Szemer\'edi-Trotter theorem rather efficiently for counting weighted incidences, similar to how it was done in the paper of Iosevich et al. (See \cite{IKRT}, Lemma 6).

\begin{lemma}\label{wst}
Suppose, $|Q|\geq |P|,$ and the weights of lines in ${\mathcal L}$ have been capped by $N$. For $x\in P+Q,$ let $n(x)$ be the number of realisations of $x$ as a sum.
Then for $t:\;N\ll t \leq |P|$,
\begin{equation}\label{use}
|\{x\in P+Q:\;n(x)\geq t\}|\ll \frac{|L|^{\frac{3}{2}} |Q|^{\frac{5}{2}}}{t^3},
\end{equation}
\end{lemma}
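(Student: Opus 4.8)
The plan is to reduce (\ref{use}) to a purely incidence-theoretic statement about the weighted point set and then to extract the exponent from the inverse cube weight law (\ref{incube}) via a dyadic decomposition of $\mathcal L$ by weight. First I would show that $n(x)\le m(x)$ for every $x\in P+Q$, where $m(x)=\sum_{\ell\in\mathcal L:\,x\in\ell}m(\ell)$ is the weighted degree of $x$ in the sense already introduced for intersection points. Fix a representation $x=p+q$ and let $\ell_0\in L$ be the unique line through the origin carrying $p$; then $x\in\ell_0+q\in\mathcal L$ and $q$ is one of the $Q$-points on that line. Any two representations with the same direction $\ell_0$ satisfy $q_1-q_2=p_2-p_1\in\ell_0$, hence lie on the very same line of $\mathcal L$ through $x$, and on a single such line the number of representations is at most $\min(|Q\cap\ell|,N)=m(\ell)$, the bound $N$ coming from the cap, i.e. from $|P\cap\ell_0|\le N$. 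Summing over directions gives $n(x)\le m(x)$, so it suffices to bound $|S_t|$, where $S_t=\{x\in P+Q:\,m(x)\ge t\}$.

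Next I would estimate the weighted incidence count from above. Decompose $\mathcal L=\bigsqcup_j\mathcal L_j$ into dyadic weight classes $\mathcal L_j=\{\ell:\,2^j\le m(\ell)<2^{j+1}\}$ with $1\le 2^j\le N$, and write
$$
t\,|S_t|\le\sum_{x\in S_t}m(x)\ll\sum_j 2^j\,I(S_t,\mathcal L_j),
$$
bounding each term by the Szemer\'edi--Trotter inequality (\ref{STe}). The crucial input is a two-sided control of $|\mathcal L_j|$: the inverse cube law (\ref{incube}) gives $|\mathcal L_j|\ll |Q|^2/2^{3j}$, while the total weight identity $\sum_\ell m(\ell)=W=|L||Q|$ forces $|\mathcal L_j|\le W/2^j=|L||Q|/2^j$. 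Using $|\mathcal L_j|\le\min\bigl(|Q|^2/2^{3j},\,|L||Q|/2^j\bigr)$, the leading Szemer\'edi--Trotter term $2^j(|S_t|\,|\mathcal L_j|)^{2/3}$ is an increasing geometric progression in $j$ below the crossover $2^j\approx\bar m=\sqrt{|Q|/|L|}$ and a decreasing one above it, so the whole sum is controlled by its value at $2^j\approx\bar m$, namely $\ll |S_t|^{2/3}|L|^{1/2}|Q|^{5/6}$. The two remaining Szemer\'edi--Trotter terms sum to $\ll N|S_t|+W$: since $t\gg N$ the first is absorbed into the left-hand side, and $W=|L||Q|$ is dominated in the range $t\le |P|$.

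Combining these yields $t\,|S_t|\ll |S_t|^{2/3}|L|^{1/2}|Q|^{5/6}$, i.e. $|S_t|\ll |L|^{3/2}|Q|^{5/2}/t^3$, which is (\ref{use}). The step I expect to be delicate is locating the correct crossover scale. Running the dyadic sum with the inverse cube law alone makes it blow up on the light ($2^j\approx 1$) lines, while using only the crude count $|\mathcal L_j|\le|L||Q|$ places the crossover at $2^j\approx(|Q|/|L|)^{1/3}$ and loses a whole factor $\bar m$. The sharp estimate appears precisely because the total-weight bound $|\mathcal L_j|\le W/2^j$ sharpens the crude count at small $j$ and pushes the crossover out to the mean weight $2^j\approx\bar m$; this is the quantitative expression of the principle that the inverse cube distribution lets one apply Szemer\'edi--Trotter as if the maximum weight per line were the mean $\bar m\ll\sqrt{|Q|/|L|}$ of (\ref{weight}), rather than the cap $N$. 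The lower bound $|\mathcal L|\gg|L|^{3/2}|Q|^{1/2}$ of (\ref{lowerl}) is exactly the value of $|\mathcal L_{\ge\bar m}|$ predicted by (\ref{incube}), which is a useful consistency check that the crossover has been placed correctly.
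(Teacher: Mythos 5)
Your reduction $n(x)\le m(x)$ is correct, and your treatment of the leading term is sound: the dyadic bookkeeping with $|\mathcal L_j|\le\min\bigl(|Q|^2/2^{3j},\,W/2^j\bigr)$ and the geometric summation to the crossover $2^j\approx\bar m$ is a clean substitute for the paper's appeal to the weighted estimate (\ref{STew}), and both routes give the same main term $|S_t|^{2/3}|L|^{1/2}|Q|^{5/6}$ together with the secondary terms $N|S_t|+W$. The gap is in your final clause: the assertion that ``$W=|L||Q|$ is dominated in the range $t\le|P|$'' is unjustified, and it is false in exactly the regime that matters. From $t|S_t|\ll |S_t|^{2/3}|L|^{1/2}|Q|^{5/6}+|L||Q|$ one can only conclude $|S_t|\ll |L|^{3/2}|Q|^{5/2}/t^3+|L||Q|/t$, which is the paper's intermediate bound (\ref{useprime}); the trivial term $|L||Q|/t$ is absorbed by the cubic term only when $t\ll\sqrt[4]{|L||Q|^3}$. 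Since typically $|L|\ll|P|$ (for instance $|P|\approx|L|N$ in the product set case, so that $\sqrt[4]{|L||Q|^3}\approx|L|N^{3/4}\ll|L|N$ when $Q=P$), the interval $\sqrt[4]{|L||Q|^3}<t\le|P|$ is nonempty, and there your argument yields only $|S_t|\ll|L||Q|/t$, which is strictly weaker than (\ref{use}).

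Closing this gap requires the extra geometric input the paper deploys for large $t$: the lines of $\mathcal L$ come in only $|L|$ directions, so at most $|L|$ of them pass through any single point; hence a point with $m(x)\ge t$ receives at least half of its weight from lines of weight $\gg t/|L|$, and the dyadic sum may be restricted to the tail $2^j\bar m\gg t/|L|$. In that tail the ``trivial'' contribution is no longer $W$ but $\sum_{i\ge j} (2^i\bar m)\,|\mathcal L_{2^i\bar m}|\ll\sum_{i\ge j}|Q|^2/(2^i\bar m)^2\ll |Q|^2|L|^2/t^2$, so after dividing by $t$ one gets $|Q|^2|L|^2/t^3$, which is dominated by $|L|^{3/2}|Q|^{5/2}/t^3$ precisely because $|Q|\ge|P|\ge|L|$; this is the content of the paper's estimate (\ref{tail}). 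Without this step, or some substitute for it, your argument proves the lemma only for $t\le\sqrt[4]{|L||Q|^3}$, not in the full stated range $N\ll t\le|P|$.
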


\begin{proof}
The condition $|Q|\geq |P|\gg N^2$ (which holds in both the ratio and product set cases) ensures that (\ref{incube}) is valid, since for all $l\in \mathcal L,\;m(l)\leq N$. Observe that for any point set ${\mathcal P}$, the number of weighted incidences of ${\mathcal L}$ with ${\mathcal P}$ can be bounded from above using dyadic decomposition of ${\mathcal L}$ by weight in excess of $\bar m$, via
\begin{equation}
\mathcal I \ll i_m(\mathcal P, \mathcal L_{\bar m}) + \sum_{j= 1}^{\log_2 N - \log_2 \bar m}\, (2^j \bar m) |I(\mathcal P,\mathcal L_{2^j \bar m})|,
\label{alt}\end{equation}
where $\mathcal L_{\bar m}$ denotes the set of lines from ${\mathcal L}$ whose weight is at most $\bar m$, $\mathcal L_{2^j \bar m}$ denotes the dyadic group of lines whose weights are approximately $2^j \bar m$, and the notation $i_m$ has been introduced in the context of Theorem \ref{STw}.

One uses the Szemer\'edi-Trotter theorem to estimate each number of incidences $|I(\mathcal P,\mathcal L_{2^j \bar m})|$, for $j>0$ involved and its weighted version for the first term. The condition (\ref{incube}) ensures that the  weighted Szemer\'edi-Trotter estimate (\ref{STew}) for the first term in (\ref{alt}) dominates as follows. For $j\geq 1$, one has
\begin{equation}\label{ests}
|I(\mathcal P,\mathcal L_{2^j \bar m})| \ll  (|\mathcal P| |\mathcal L_{2^j \bar m}|)^{\frac{2}{3}} + |\mathcal P| + |\mathcal L_{2^j \bar m}|.
\end{equation}
For the case $j=0$, Theorem \ref{STw} is used.
\begin{equation}\label{estss}
i_m(\mathcal P, \mathcal L_{\bar m}) \ll \bar m^{\frac{1}{3}}(|\mathcal P|W)^{\frac{2}{3}}+ \bar m |\mathcal P|+ W.
\end{equation}
Together with the rest of the bounds for each $|\mathcal L_{2^j \bar m}|$ coming from (\ref{incube}) this yields
\begin{equation}
\mathcal I \ll  \bar m^{\frac{1}{3}} (|\mathcal P|W)^{\frac{2}{3}} + N{|\mathcal P|} + W.
\label{int}\end{equation}
(See (\ref{tail}) below where the  bounds in (\ref{ests}) for the terms for $j>1$ are spelled out explicitly.)

Observe now  that
for $x\in P+ Q\subseteq \mathcal P(\mathcal L)$, with $n(x)$ denoting the number of realisations of a sum set element $x$, one always has $n(x)\leq m(x)$, where $m(x)$ is the weight of $x$ as a member of $\mathcal P(\mathcal L)$.

Applying (\ref{int}) to the point set
\begin{equation}
\mathcal P_t = \{x\in P+Q:\;n(x)\geq t\},\label{pt}\end{equation}
with the lower bound $t|\mathcal P_t|$ for ${\mathcal I}$,
one sees that for $t\gg N$ the term $N{|\mathcal P|}$ in the right-hand side of (\ref{int}) cannot possibly dominate the remaining terms.
Hence for $t\gg N$ one has
\begin{equation}\label{useprime}
|\mathcal P_t|\ll \frac{|L|^{\frac{3}{2}} |Q|^{\frac{5}{2}}}{t^3} + \frac{|L||Q|}{t},
\end{equation}
and has to be slightly more careful with the trivial term $\frac{|L||Q|}{t}$ in the bound (\ref{useprime}) and refine it, so that it becomes absorbed into the first term to get (\ref{use}).
This is clearly the case for $t\leq \sqrt[4]{|L||Q|^3}$, but not yet for higher $t$.

Let us now address the issue of large $t$.
The lines in ${\mathcal L}$ come in $|L|$ possible directions, and therefore no more than $|L|$ lines can be incident to a single point. Hence, lines from a dyadic group  $\mathcal L_{2^j \bar m}$ do not contribute to sets $\mathcal P_t$, whenever $t\gg |L|\cdot (2^j \bar m)$. This means that one needs, with (\ref{incube}) in view, only a ``tail'' estimate for the right-hand side of (\ref{alt}), with $t\approx |L| (2^j \bar m)$:
\begin{equation}
t|\mathcal P_t|  \ll \sum_{i=j}^{\log_2 N - \log_2 \bar m}\, (2^i \bar m) |I(\mathcal P_t,\mathcal L_{2^i \bar m})| \ll \frac{|Q|^\frac{4}{3}|\mathcal P_t|^{\frac{2}{3}}}{2^j \bar m } + N|\mathcal P_t| + \frac{ |Q|^2}{(2^j \bar m)^2}.
\label{tail}\end{equation}
It follows that, since $N\ll t\leq |P|$,
$$
|\mathcal P_t|  \ll \frac{|L|^{\frac{3}{2}}|Q|^{\frac{5}{2}}}{2^{3j}t^3}+  \frac{ |Q|^2}{(2^{j} \bar m)^2 t } \ll
\frac{|L|^{\frac{3}{2}}|Q|^{\frac{5}{2}}}{t^3} +  \frac{ |Q|^2|L|^2}{t^3 }.
$$
Since  $|Q|\geq |P|\geq |L|$, the first term in the last estimate dominates the second term, thus proving (\ref{use}).
\end{proof}

\medskip
Let us now apply the estimate (\ref{use}) with $Q=\pm P$; of these two the case $Q=-P$ will serve to estimate $E_3(P)$.

The assumptions of Lemma \ref{wst} are satisfied, and therefore
\begin{equation}\label{e3est}
E_3(P)=\sum_{x\in P-P} n^3(x)\ll  N^2|P|^2 + |L|^{\frac{3}{2}} |P|^{\frac{5}{2}} \sum_{j=1}^{2\log_2 |A|} \frac{2^{3j}}{2^{3j}} \ll
|L|^{\frac{3}{2}} |P|^{\frac{5}{2}} \log|A|,
\end{equation}
since in both the ratio and product set cases $N^2\ll \sqrt{|P||L|^3}.$

Using this, the first formula in (\ref{thth}), with $A$ replaced by $P$, yields:
\begin{equation}
E(P, P-P) \gg \frac{|P|^{\frac{11}{2}}}{ |L|^{\frac{3}{2}} |P-P| \log|A|}. \label{lowerbd1}
\end{equation}

To get a similar estimate for the sum set $P+P$, let us use the second formula in (\ref{thth}), with $A$ replaced by $P$ and $D=P-P$. To estimate the quantity $t=\max_{d\in D^+}|P_d|$ it suffices to observe that part of the energy $E(P)$ supported on $D^+$ is bounded from below by (\ref{needed}), where $A$ gets replaced by $P$, and from above, by Lemma \ref{wst}, as
$$O\left(N|P|^2+
\frac{ |L|^{\frac{3}{2}} |P|^{\frac{5}{2}}}{t}\right).$$

It follows that unless $|P+P|\gg \frac{|P|^2}{N}$, which is much better than (\ref{res}), one has
\begin{equation}\label{forplus}
\max_{d\in D^+}|P_d|\ll \frac{|L|^{\frac{3}{2}}|P+P|}{|P|^{\frac{3}{2}}}.
\end{equation}
Then, using the second estimate in (\ref{thth}) and (\ref{e3est}), one gets
\begin{equation}\label{lowerbd2}
E(P, P+P) \gg \frac{|P|^9}{|L|^3 |P+P|^3\log|A|}.
\end{equation}

On the other hand, one can use Lemma \ref{wst}  with $Q=P\pm P$ and estimate the quantity $E(P, P\pm P)$ from above. Then for any $t\gg N$:
\begin{equation}
E(P, P\pm P) \ll |P||P\pm P| t + \frac{|L|^{\frac{3}{2}} |P\pm P|^{\frac{5}{2}}}{t}, \label{upperbd}
\end{equation}
and choosing
$$t=\frac{|P\pm P|^{\frac{3}{4}} |L|^{\frac{3}{4}}}{\sqrt{|P|}}\gg N,$$ to match the two terms in (\ref{upperbd}) yields
\begin{equation}
E(P, P\pm P) \ll \sqrt{|P|}|P\pm P|^{\frac{7}{4}} |L|^{\frac{3}{4}}. \label{upperbddone}
\end{equation}

Combining this with (\ref{lowerbd1}) results in
\begin{equation}\label{ngood1}
|P-P|^{\frac{11}{4}} |L|^{\frac{9}{4}} \gg \frac{|P|^5}{\log |A|}.
\end{equation}
To obtain the first estimate of (\ref{res}) it suffices to note that one has (with what the notations $P,L$ stand for in the ratio set case) $|P|\gg |A|^2$, $|L|\leq |A:A|, $
plus $|P-P| \leq |A-A|^2$.

\medskip
To prove the second estimate in (\ref{res}) the estimates (\ref{upperbddone}) and (\ref{lowerbd2}) get put together.
It follows that
\begin{equation}\label{ngood2}
|P+P|^{\frac{19}{4}} |L|^{\frac{15}{4}} \gg \frac{|P|^{\frac{34}{4}} }{\log |A|}.
\end{equation}
Using $|L|\leq |A:A|$, $|P|\gg|A|^2$ and $|P+P|\leq |A+A|^2$ results in the second estimate in (\ref{res}).

\medskip
In the product set case, where  $|P| = |L|N$, the estimates (\ref{ngood1}, \ref{ngood2}) become

\begin{equation}\label{ngood3}\begin{aligned}
|A-A|^{\frac{11}{2}}  &\gg \frac{(|L|N^2)^{\frac{11}{4}}}{\sqrt{N} \log |A|},\\
|A+A|^{\frac{19}{2}} & \gg \frac{(|L|N^2)^{\frac{19}{4}}}{N \log |A|}.\end{aligned}
\end{equation}
Lemma \ref{ll} now supplies a non-trivial upper bound on $N$.
Substituting the bounds from (\ref{min}) and (\ref{nbd}) into (\ref{ngood3})  then yields the last two estimates of (\ref{res}) and completes the proof of Theorem \ref{maint}. \qed

\subsection{Appendix. Proof of Lemma \ref{ll}}
The lower bound of (\ref{nbd}) is merely the multiplicative version of the popularity argument behind (\ref{dplus}), resulting in (\ref{needed}). Indeed, the multiplicative energy of $A$ supported on those lines through the origin, which correspond to ratios $r\in A:A$, whose number of realisations  $n(r)\geq\frac{1}{2}\frac{|A|^2}{|A\cdot A|},$ is $\gg \frac{|A|^4}{|A\cdot A|}$.

On the other hand, it was proven in \cite{LR}, \cite{SS1} that the multiplicative energy coming from the lines through the origin, supporting at least $N$ points  of $A\times A$ (that from the set of all ratios $r\in A:A$ such that $n(r)\geq N$), is $O\left(\frac{|A\pm A|^2|A|}{N}\right)$.
This, together with (\ref{min}) settles the upper bound in (\ref{nbd}) and proves Lemma \ref{ll}.

\medskip
For completeness sake, a simple version of the proof of the upper bound in (\ref{nbd}) for $N$ is given below. This bound was derived in \cite{LR}, \cite{SS1} via the Szemere\'ed-Trotter type estimates for convex functions, using a particular example of a convex function, the exponential. Let us show that the bound in question, in fact,  represents a variant of the well known construction of Elekes (\cite{E}) apropos of sum-products, which gave the exponent $\frac{5}{4}$ implicit in the bounds (\ref{nbd}). The notation in the forthcoming argument is somewhat independent from the rest of the paper.

Consider a set $A$, not containing zero and a set of lines $L = \{y = \frac{d + x}{a}\}$, where $d$ is an element of the difference set $A-A$ (or the sum set $A+A$, the modification required being trivial) and $a\in A$. Clearly there are $|A-A||A|$ lines. Therefore, the number of points in a set $P_t$, where at least $t\leq |A|$ lines intersect is, by
 (\ref{work}), bounded as
 $$
 |P_t|\ll \frac{|A-A|^2|A|^2}{t^3}.
 $$
 Given a point $(x,y)\in P_t$, one has an intersections of at least $t$ lines of $L$ there, namely $y=\frac{d_i + x}{a_i}$ for at least $t$ different pairs $(d_i,a_i)$.
 For each $d_i$ take some fixed representation $d_i = u_i-v_i$, let $a\in A$ be a variable. Clearly
 $d_i + x  = (u_i-a) + (a-v_i+x) = d_i'+x'$. I.e., $P_t$ is such that if it contains one point $(x,y)$, then it contains at least $|A|$ distinct points with the same ordinate.

 Conversely, if $R_t$ is the subset of ratios from $A:A$ which have at least $t$ realisations, it is clearly a subset of the set of ordinates of the points from $P_t$. Indeed, if
 $r=\frac{a'_1}{a_1}= \ldots =\frac{a'_k}{a_k}$, then it equals to the ordinate of a horizontal family of intersection points of at least $k$ lines, identified by the pairs $(d_i = a_i' - a,a_i)$, these intersection points having the abscissae $x=a$.

 It follows that
\begin{equation}\label{livar}
 |R_t|\ll \frac{|A-A|^2|A|}{t^3},
 \end{equation}
 and hence the multiplicative energy supported on $R_t$ is ${\displaystyle O\left(\frac{|A-A|^2|A|}{t}\right).}$ Comparing this with the lower bound in terms of $\frac{|A|^4}{|A\cdot A|}$ gives the upper bound in (\ref{nbd}), where $t$ has been replaced by $N$. \qed

\section{Acknowledgement} The author thanks Oliver Roche-Newton for pointing out that the estimates (\ref{ngood3}) needed a non-trivial upper bound for $N$ in the product set case.

\vspace{2cm}

\end{document}